\documentclass[12pt]{article}
\usepackage{times}
\usepackage[left=1in,top=1in,right=1in]{geometry}
\usepackage{cite}
\usepackage{amsmath,amsthm,amssymb}
\usepackage{color}
\usepackage{latexsym}
\usepackage{cite}
\usepackage{amsmath,amssymb,amsthm}
\title{On Complex Sasakian manifolds }
\author{Aysel Turgut Vanli$ ^1 $, \.Inan \"Unal$ ^2 $ and   Keziban Avcu$ ^1$ \\
	$ ^1 $Gazi University and $ ^2 $Munzur University }
\date{}

\theoremstyle{plain}

\newtheorem{theorem}{Theorem}
\theoremstyle{plain}

\newtheorem{corollary}{Corollary}

\newtheorem{definition}{Definition}
\newtheorem{example}{Example}

\newtheorem{lemma}{Lemma}

\newcommand{\NCM}{normal{\color{white}$ \theta $}complex{\color{white}$ \theta $}contact{\color{white}$ \theta $}metric{\color{white}$ \theta $}manifold}

\begin{document}
\maketitle
\date{}

%\makenextpage

\noindent \textbf{Abstract:} In this article we study a class of \NCM\ which is
called a complex Sasakian manifold. This kind of manifold has a globally defined complex contact form and normal complex contact structure. We give the definition of a complex Sasakian manifold by consider the real case and we present general properties. Also we obtain some useful curvature relations. Finally we examine flatness conditions for general curvature tensor B. 
 \newline 
 \newline
\noindent \textbf{Keywords:} Complex Sasakian manifolds,complex contact manifolds, curvature tensors 
\newline
\newline
\noindent \textbf{2010 AMS Mathematics Subject Classification:} 53C15,
53C25, 53D10\newline
{$^1 $avanli@gazi.edu.tr, $^1 $keziban.uysal2812@gmail.com , $^2 $inanunal@munzur.edu.tr}

\section{ Introduction}

Complex contact manifolds are still has many open problems. The importance of this subject is not only the complex version of real contact manifolds also one can find many important informations about complex manifolds, K\"ahler manifolds. In addition there are some applications in theoretical physics  \cite{kholodenko2013applications}. Complex contact manifolds has an old history same as real contact manifolds, researchers could not give their attention to the subject. When we look at the 1980s there are very important improvements in the Riemannian geometry of complex contact manifolds. Ishihara and Konishi constructed tensorial relations for a complex almost contact structure and they also presented normality \cite{ishihara1979real,ishihara1980complex,ishihara1982complex}. The Riemann geometry of complex almost contact metric manifolds could be divided into three notions;
\begin{enumerate}
	\item \textbf{\textit{IK-normal complex contact metric manifolds}} : A complex contact manifold has a normal contact structure in the sense of Ishihara-Konishi. This type of manifolds were studied in \cite{ishihara1979real,ishihara1980complex,ishihara1982complex,imada2014construction,imada2015complex,turgut2018h}.
	\item \textbf{\textit{Normal complex contact metric manifolds}}: A complex contact manifold has a normal contact structure in the sense of Korkmaz.  This type of manifolds were studied in \cite{korkmaz2000,Korkmaz2003,blair2006corrected,blair2009special,blair2011bochner,blair2012homogeneity,blair2012symmetry,vanli2015curvature,turgut2017conformal,vanli2017complex}.
	\item \textbf{\textit{Complex Sasakian manifolds}}: A complex contact manifold with a globally complex contact form and has a normal contact structure in the sense of Korkmaz. This type of manifolds were studied in \cite{foreman2000complex,fetcu2006harmonic,fetcuadapted}.
\end{enumerate}
In this work we study on third type of these manifolds. Firstly we adopted the definition of a complex Sasakian manifold by consider the definition of a real Sasakian manifold. Later we give some fundamental equations and we obtain curvature properties. Finally we examine some flatness conditions. We use a general tensor which is defined in \cite{shaikh2018some} and is called by $ B- $tensor. We prove that a complex Sasakian manifold could not be $ B- $flat.

\section{Preliminaries}
In this section we give fundamental facts on complex contact manifolds. For details reader could be read \cite{foreman1996variational,korkmaz2000,blair2010riemannian}.
\begin{definition}
	Let $N$ be a complex manifold of odd complex dimension $2p+1$ covered by an
open covering $\mathcal{C}=\left\{ \mathcal{A}_{i}\right\} $ consisting of
coordinate neighborhoods. If there is a holomorphic $1$-form $\eta _{i}$
on each $\mathcal{A}_{i}\in \mathcal{C}$ in such a way that for any $%
\mathcal{A}_{i},\mathcal{A}_{j}\in \mathcal{C}$ and for a holomorphic function $f_{ij}$ on $\mathcal{A}_{i}\cap \mathcal{A}_{j}\neq \emptyset $
\begin{equation*}
\eta _{i}\wedge (d\eta _{j})^{p}\neq 0\text{ in }\mathcal{A}_{i}\text{, }
\end{equation*}%
\begin{equation*}
\eta _{i}=f_{ij}\eta _{j},\text{ }\mathcal{A}_{i}\cap \mathcal{A}%
_{j}\neq \emptyset ,
\end{equation*}%
then the set $\left\{ \left( \eta _{i},\mathcal{A}%
_{i}\right) \mid \mathcal{A}_{i}\in \mathcal{C}\right\} $ of local structures is called complex contact structure and with this structure $N$ is called a complex contact manifold.
\end{definition}

The complex contact structure determines a non-integrable distribution $H_i$ by the equation $\eta_i =0$ such as 
\begin{equation*}
H_{i}=\{X_{P}:\eta_{i}(X_{P})=0, X_{P}\in T_{P}N\}.
\end{equation*}
and a holomorphic vector field $ \xi_i $ %the complex characteristic vector field
is defined by  

\begin{equation*}
\eta_i(\xi_i)=1
\end{equation*}
and a complex line bundle is defined by $ E_i=Span\{\xi_i\} $.\\
\qquad Let $ T^c(N) $ be complexified of tangent bundle of $ (N,J,\eta_i) $ and let define vector fields 
\begin{eqnarray*}
	U_i=\xi_i+\bar{\xi_i}\ \ \ \ \ \ \ V_i=-i(\xi_i+\bar{\xi_i})
\end{eqnarray*}
and 1-forms
\begin{eqnarray*}
	u_i=\frac{1}{2}(\eta_i+\bar{\eta_i}) \ \ \ \ \ \ v_i=\frac{1}{2}i(\eta_i-\bar{\eta_i}).
\end{eqnarray*}
Therefore we get 
\begin{enumerate}
	\item $ V_i=-JU_i $ and $ v_i=u_i\circ J $
	\item $ U_i=JV_i $ and $ u_i=-v_i\circ J $
	\item $ u_i(U_i)=v_i(V_i)=1 $ and $ u_i(V_i)=v_i(V_i)=0 $.
\end{enumerate}
The complexified $ H_i $ and $ E_i $ is defined by 
\begin{eqnarray*}
{	H_i}^c&=&\{W\in T^c(N)\arrowvert u(W)=v(W)=0\}\\
{	E_i}^c&=&Span\{U,V\}. 
\end{eqnarray*}
We use notation $ \mathcal{H} $ and $ \mathcal{V} $ for the union of $ {	H_i}^c $ and $ E_i^c $ respectively. $ \mathcal{H} $ is called horizontal distribution and $ \mathcal{V} $ is called vertical distribution 
and we can write 
\begin{equation} \label{TN=H+V}
TN=\mathcal{H}\oplus\mathcal{V}.
\end{equation}
 Ishihara and Konishi \cite{ishihara1980complex} proved that $N$ admits always a complex contact structure of $ C^{\infty} $.
\begin{definition} \label{complexalmostscontact}
	Let $N$ be a complex manifold with complex structure $ J $, Hermitian metric $ g $ and $\mathcal{C=}\left\{ 
	\mathcal{A}_{i}\right\} $ be open covering of $N$ with coordinate
	neighbourhoods $\{\mathcal{A}_{i}\mathcal{\}}.$ If $N$ satisfies the
	following two conditions then it is called a \textit{complex almost contact
		metric manifold}:
	
	1. In each $\mathcal{A}_{i}$ there exist $1$-forms $u_{i}$ and $%
	v_{i}=u_{i}\circ J$, with dual vector fields $U_{i}$ and $V_{i}=-JU_{i}$ and 
	$(1,1)$ tensor fields $G_{i}$ and $H_{i}=G_{i}J$ such that 
	\begin{equation} \label{G^2veH^2}
	H_{i}^{2}=G_{i}^{2}=-I+u_{i}\otimes U_{i}+v_{i}\otimes V_{i}
	\end{equation}%
	\begin{equation*} \label{H=GJ}
	G_{i}J=-JG_{i},\quad GU_{i}=0,\quad
	\end{equation*}%
	\begin{equation*} \label{g(GX,Y)=-g(X,GY)}
	g(X,G_{i}Y)=-g(G_{i}X,Y).
	\end{equation*}%
	2.On $\mathcal{A}_{i}\cap \mathcal{A}_{j}\neq \emptyset $ we have 
	\begin{eqnarray*}
		u_{j} &=&au_{i}-bv_{i},\quad v_{j}=bu_{i}+av_{i},\; \\
		G_{j} &=&aG_{i}-bH_{i},\quad H_{j}=bG_{i}+aH_{i}
	\end{eqnarray*}%
	where $a$ and $b$ are functions on $\mathcal{U}_{i}\cap \mathcal{U}_{j}$
	with $a^{2}+b^{2}=1$ \cite{ishihara1979real,ishihara1980complex}.
\end{definition}
By direct computation we have 
\begin{eqnarray} 
H_{i}G_{i} &=&-G_{i}H_{i}=J_{i}+u_{i}\otimes V_{i}-v_{i} \otimes U_{i} \label{HG=-GH} \label{HG,GH}\\
J_{i}H_{i} &=&-H_{i}J_{i}=G_{i}  \notag \\
G_{i}U_{i} &=&H_{i}U_{i}=H_{i}V_{i}=0  \notag \\
u_{i}G_{i} &=&v_{i}G_{i}=u_{i}H_{i}=v_{i}H_{i}=0 \notag \\
J_{i}V_{i} &=&U_{i}, \ g(U_{i},V_{i})=0 \notag\\
g(H_{i}X,Y) &=&-g(X,H_{i}Y) \notag.
\end{eqnarray}

By the local contact form $\eta $ is $u-iv$ to within a
non-vanishing complex-valued function multiple and the local fields $G$ and $%
\ H$ are related to $du$ and $dv$ by 
\begin{eqnarray*}
	du(X,Y) &=&g(X,GY)+(\sigma \wedge v)(X,Y),~~~ \\
	dv(X,Y) &=&g(X,HY)-(\sigma \wedge u)(X,Y)
\end{eqnarray*}
where $\sigma (X)=g(\nabla _{X}U,V)$, $\nabla $ being the Levi-Civita
connection of $g$ \cite{ishihara1980complex}. $ \sigma $ is called IK-connection \cite{foreman1996variational}.

Ishihara and Konishi \cite{ishihara1980complex} study on normality of complex   almost contact metric manifolds. They defined local tensors
\begin{eqnarray*}
	S(X,Y) &=&[G,G](X,Y)+2g(X,GY)U-2g(X,HY)V \\
	&&+2(v(Y)HX-v(X)HY)+\sigma (GY)HX \\
	&&-\sigma (GX)HY+\sigma (X)GHY-\sigma (Y)GHX,
\end{eqnarray*}
\begin{eqnarray*}
	T(X,Y) &=&[H,H](X,Y)-2g(X,GY)U+2g(X,HY)V \\
	&&+2(u(Y)GX-u(X)GY)+\sigma (HX)GY \\
	&&-\sigma (HY)GX+\sigma (X)GHY-\sigma (Y)GHX
\end{eqnarray*}
where
\begin{equation*}
\lbrack G,G](X,Y)=(\nabla _{GX}G)Y-(\nabla _{GY}G)X-G(\nabla
_{X}G)Y+G(\nabla _{Y}G)X
\end{equation*}%
is the Nijenhuis torsion of $G$. 
\begin{definition}
\cite{ishihara1980complex} A complex{\color{white}$ \eta $}almost{\color{white}$ \eta $}contact{\color{white}$ \eta $}metric{\color{white}$ \eta $}manifold is called IK-Normal{\color{white}$ \eta $} if $ S=T=0 $. 
\end{definition}
An IK-Normal manifold has K\"ahler structure. In other words a non-K\"ahler complex almost contact metric manifold is not to be IK-Normal such Iwasawa manifold. Iwasawa manifold is not K\"ahler and  it is compact manifold which has symplectic structure. Also Baikousis et al. \cite{baikoussis1998holomorphic} obtained complex almost contact structure on Iwasawa manifold. Korkmaz gave a weaker definition for normality and Iwasawa manifold is normal in the sense of this definition. 
\begin{definition} [Korkmaz's Definition,  \cite{korkmaz2000} ]
	A complex almost contact metric manifold is called normal if
	
	$\qquad S(X,Y)=T(X,Y)=0$ \ for all $X,Y$ in $\mathcal{H},$ and $\ $
	
	$\qquad S\left( X,U\right) =T\left( X,V\right) =0$ for all $X.$
\end{definition} 

Also for arbitrary vector fields $ X$ on $N$ we have \cite{ishihara1980complex,foreman1996variational,korkmaz2000} 
\begin{align}
\nabla _{X}U &=-GX+\sigma (X)V,~~  \label{nablaXU} ,
\ \ \ \nabla _{X}V =-HX-\sigma (X)U,~~  \\
\nabla _{U}U &=\sigma (U)V,~~~\nabla _{U}V=-\sigma (U)U  \label{nablaUU} , \ \ 
\nabla _{V}U =\sigma (V)V,~~~~\nabla _{V}V=-\sigma (V)U.~~  
\end{align}

\begin{theorem}
	A complex almost
	contact metric manifold is normal if and only
	if the covariant derivative of \ $G$ and $H$ have the following
	forms: 
	\begin{eqnarray}
	(\nabla _{X}G)Y &=&\sigma (X)HY-2v(X)JY-u\left( Y\right) X
	\label{Yeni normallik G} \\
	&&-v(Y)JX+v(X)\left( 2JY_{0}-\left( \nabla _{U}J\right) GY_{0}\right)  \notag
	\\
	&&+g(X,Y)U+g(JX,Y)V  \notag \\
	&&-d\sigma (U,V)v(X)\left( u(Y)V-v(Y)U\right)  \notag
	\end{eqnarray}
	and%
	\begin{eqnarray}
	(\nabla _{X}H)Y &=&-\sigma (X)GY+2u(X)JY+u(Y)JX  \label{Yeni normallik H} \\
	&&-v(Y)X+u(X)\left( -2JY_{0}-\left( \nabla _{U}J\right) GY_{0}\right)  \notag
	\\
	&&-g(JX,Y)U+g(X,Y)V  \notag \\
	&&+d\sigma (U,V)u(X)\left( u(Y)V-v(Y)U\right)  \notag
	\end{eqnarray}
	where $X=X_{0}+u(X)U+v(X)V$ and $Y=Y_{0}+u(Y)U+v(Y)V,X_{0},Y_{0}\in $ $%
	\mathcal{H}$ \cite{vanli2015curvature}.
\end{theorem}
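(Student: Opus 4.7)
The plan is to derive the two formulas from the normality conditions $S=T=0$ (on $\mathcal{H}\times\mathcal{H}$ and on $(X,U)$, $(X,V)$ respectively), and conversely to substitute the claimed expressions back into $S$ and $T$ and check these vanish on the required vector fields. Throughout I would decompose every argument as $X=X_{0}+u(X)U+v(X)V$ with $X_{0}\in\mathcal{H}$, so the verification reduces to checking the horizontal--horizontal, horizontal--vertical, and vertical--vertical blocks independently.

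The central tool is the identity obtained by covariantly differentiating $G^{2}=-I+u\otimes U+v\otimes V$, namely
\begin{equation*}
(\nabla_{X}G)GY+G(\nabla_{X}G)Y=(\nabla_{X}u)(Y)\,U+u(Y)\nabla_{X}U+(\nabla_{X}v)(Y)\,V+v(Y)\nabla_{X}V,
\end{equation*}
together with the already-established expressions for $\nabla_{X}U$ and $\nabla_{X}V$ from (\ref{nablaXU}); this lets one convert $(\nabla_{GX}G)Y$ into terms involving $(\nabla_{X}G)$ acting on auxiliary vectors. The other ingredients are the algebraic relations from Definition~\ref{complexalmostscontact} and (\ref{HG,GH}), in particular $H=GJ$ and $GJ=-JG$, together with the formulas $du(X,Y)=g(X,GY)+(\sigma\wedge v)(X,Y)$ and $dv(X,Y)=g(X,HY)-(\sigma\wedge u)(X,Y)$, which relate the structure tensors to the exterior derivatives of the vertical $1$-forms.

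For the forward direction I would first take $X,Y\in\mathcal{H}$ and expand $S(X,Y)=0$; the Nijenhuis bracket $[G,G]$ contributes four covariant derivatives of $G$, each of which is shifted by the $G^{2}$ identity above. Symmetrizing and antisymmetrizing in $X,Y$ isolates $(\nabla_{X}G)Y$ up to the symmetric correction $g(X,Y)U+g(JX,Y)V$ predicted by the claim. Next, substituting $Y=U$ or $Y=V$ and using $S(X,U)=0$ pins down the $u(Y),v(Y)$-coefficients in (\ref{Yeni normallik G}), with the $d\sigma(U,V)$ correction emerging from the failure of $\nabla_{U}$ and $\nabla_{V}$ to commute on the vertical subbundle. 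The formula (\ref{Yeni normallik H}) for $\nabla H$ then comes either by running the parallel argument with $T=0$, or more economically by differentiating $H=GJ$ and invoking the already-derived expression for $\nabla G$ together with $(\nabla_{X}J)$ evaluated via $J^{2}=-I$.

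The main obstacle is the bookkeeping: the Nijenhuis expansion produces roughly a dozen terms after the $G^{2}$-shift, and each vertical projection $u(X)U$, $v(X)V$, $u(Y)U$, $v(Y)V$ must be tracked separately through the sign changes coming from $GJ=-JG$ and $HG=-GH$. The subtle piece is the coefficient of $v(X)\bigl(2JY_{0}-(\nabla_{U}J)GY_{0}\bigr)$ and the symmetric entry in (\ref{Yeni normallik H}); these reflect the fact that $J$ need not be parallel along the vertical directions, and they are what distinguishes Korkmaz-normality from the stronger IK-normality (under which $(\nabla_{U}J)$ vanishes and the structure becomes K\"ahler). The converse direction is a direct substitution and is mechanical once one has the identities for $\nabla U,\nabla V$ in hand: one feeds (\ref{Yeni normallik G}) into the definition of $S$, uses $H=GJ$ to collapse the $\sigma$-dependent terms, and verifies cancellation block by block; similarly for $T$ using (\ref{Yeni normallik H}).
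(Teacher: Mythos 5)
First, note that the paper does not actually prove this statement: the theorem is imported verbatim with a citation to \cite{vanli2015curvature}, and the underlying derivation is Korkmaz's \cite{korkmaz2000}; so there is no in-paper argument to compare with, and your plan has to stand on its own. As it stands, the forward direction has a genuine gap. The identity obtained by differentiating $G^{2}=-I+u\otimes U+v\otimes V$ keeps the differentiation direction fixed: it relates $(\nabla_{X}G)GY$ to $G(\nabla_{X}G)Y$, and it cannot ``convert $(\nabla_{GX}G)Y$ into terms involving $(\nabla_{X}G)$'' as your plan asserts, so the proposed mechanism for digesting the four covariant derivatives inside $[G,G]$ does not work. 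Moreover $S(X,Y)$ is skew in $(X,Y)$, so imposing $S=0$ on $\mathcal{H}\times\mathcal{H}$ and then ``symmetrizing and antisymmetrizing'' cannot produce the symmetric block $g(X,Y)U+g(JX,Y)V$ of (\ref{Yeni normallik G}), nor the $\sigma(X)HY$ term; a torsion-type condition alone determines far fewer components than $\nabla G$ has. What the actual derivation needs, and what your sketch never assembles, is the combination of (i) the vertical components coming from (\ref{nablaXU}) together with the $g$-skewness of $\nabla_{X}G$, and (ii) the structure equations $du=g(\cdot,G\cdot)+\sigma\wedge v$, $dv=g(\cdot,H\cdot)-\sigma\wedge u$ exploited through $d^{2}u=d^{2}v=0$, fed into the almost-Hermitian-type identity that expresses $2g((\nabla_{X}G)Y,Z)$ through the exterior derivative of the fundamental $2$-form and the Nijenhuis-type tensor. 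You list the $du$, $dv$ formulas among your ingredients, but the plan never says how they enter, and without that key identity they do not.

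A second, more specific problem concerns the vertical-direction terms. Korkmaz's definition does not assume $S(X,V)=0$ or $T(X,U)=0$, and the terms $v(X)\bigl(2JY_{0}-(\nabla_{U}J)GY_{0}\bigr)$ and $-d\sigma(U,V)\,v(X)\bigl(u(Y)V-v(Y)U\bigr)$ in (\ref{Yeni normallik G}) concern $X$ vertical (the $V$-direction), not $Y$ vertical; they cannot be ``pinned down by $S(X,U)=0$ with $Y=U,V$'' as you propose. They have to be computed directly, e.g.\ by applying $\nabla_{V}$ to $GU=0$, to $G^{2}$ and to $H=GJ$, and by differentiating $\sigma$, which is where $d\sigma(U,V)$ genuinely enters; attributing it to ``the failure of $\nabla_{U}$ and $\nabla_{V}$ to commute'' is a gesture, not a derivation. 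The converse half of your plan (substituting (\ref{Yeni normallik G}) and (\ref{Yeni normallik H}) back into $S$ and $T$ and checking the required blocks vanish) is fine as described, and deriving (\ref{Yeni normallik H}) from (\ref{Yeni normallik G}) via $H=GJ$ is legitimate once $\nabla J$ on vertical directions is controlled, but the forward direction needs the missing identity above to be a proof.
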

From this theorem on a normal complex contact metric manifold we have

\begin{eqnarray*}
	(\nabla _{X}J)Y &=&-2u\left( X\right) HY+2v(X)GY+u(X)\left( 2HY_{0}+\left(
	\nabla _{U}J\right) Y_{0}\right) \\
	&&+v(X)\left( -2GY_{0}+\left( \nabla _{U}J\right) JY_{0}\right) .
\end{eqnarray*}

As we have seen, there are two normality notions for a \NCM. The other kind of \NCM s is complex Sasakian manifold. This type of manifolds are normal due to Korkmaz's definition  and they were studied in \cite{foreman2000complex,fetcu2006harmonic,fetcuadapted}. The fundamental difference of this type from others is globally definition of complex contact form. Kobayashi \cite{kobayashi1959remarks} proved that a complex contact form is globally defined if and only if first Chern class of $ N $ vanishes.  Also Foreman obtained following result; 
\begin{lemma}
	Let $ (N,\eta_i) $ be a complex contact manifold. If $ \eta_i $ is globally defined i.e $ f_{ij}=1 $ then $ \sigma=0 $ \cite{foreman2000complex}. 
\end{lemma}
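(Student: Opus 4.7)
The plan is to read off $\sigma=0$ from an algebraic consequence of $\eta$ being a globally defined \emph{holomorphic} $1$-form, namely that $d\eta$ is pure of type $(2,0)$. Combined with the explicit expressions already recorded for $du$ and $dv$ in terms of $G$, $H$ and $\sigma$, this type condition produces linear constraints on $\sigma$ that admit only the trivial solution.

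To set up, I first note that the hypothesis $f_{ij}=1$ forces $\eta_{i}=\eta_{j}$, hence $u_{i}=u_{j}$ and $v_{i}=v_{j}$, on overlaps; the transition rules of Definition~\ref{complexalmostscontact} then have $a=1$, $b=0$, so $U,V,G,H$ are globally defined and $\sigma(X)=g(\nabla_{X}U,V)$ is a global, \emph{real}-valued $1$-form on $N$. Combining $du=g(\,\cdot\,,G\,\cdot\,)+\sigma\wedge v$ with $dv=g(\,\cdot\,,H\,\cdot\,)-\sigma\wedge u$ and using $v+iu=i\eta$, the key identity
\[
d\eta(X,Y)=g\bigl(X,(G-iH)Y\bigr)+i(\sigma\wedge\eta)(X,Y)
\]
drops out; simultaneously, holomorphicity of $\eta$ gives $d\eta=\partial\eta$, which on real inputs is equivalent to $d\eta(JX,Y)=i\,d\eta(X,Y)$.

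I would then test these two expressions for $d\eta$ against each other on well-chosen arguments. For $X\in\mathcal{H}$ (which is $J$-invariant since $v=u\circ J$) and $Y=U$, the vanishings $GU=HU=0$ and $\eta(X)=0$ collapse the formula to $d\eta(X,U)=i\sigma(X)$; matching this with the $(2,0)$-condition yields $i\sigma(JX)=-\sigma(X)$, and splitting this into real and imaginary parts (using that $\sigma$ is real) forces $\sigma|_{\mathcal{H}}=0$. A parallel evaluation on $(U,V)$, together with $JU=-V$, forces $\sigma(U)=\sigma(V)=0$, and hence $\sigma\equiv 0$ on $TN=\mathcal{H}\oplus\mathcal{V}$.

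The argument uses no machinery beyond the identities gathered in the preliminaries, so the only delicate point is the interplay between the real and complex structures: in particular one must justify why the $(2,0)$-type condition for $d\eta$ is equivalent to $d\eta(JX,Y)=i\,d\eta(X,Y)$ on real vector fields, and must exploit the reality of $\sigma$ to conclude from $i\sigma(JX)=-\sigma(X)$ that both sides vanish. This is the main (and only mild) obstacle.
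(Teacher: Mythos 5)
The paper does not prove this lemma at all: it is quoted from Foreman \cite{foreman2000complex}, so there is no internal proof to compare your argument against. Judged on its own terms, your computation is sound. The identity $d\eta=g(\cdot,(G-iH)\cdot)+i\,\sigma\wedge\eta$ does follow from the stated expressions for $du$ and $dv$ together with $v+iu=i\eta$; your criterion that a complex $2$-form is of type $(2,0)$ exactly when $\omega(JX,Y)=i\,\omega(X,Y)$ on real arguments is correct, and it applies to $d\eta$ because $\bar\partial\eta=0$ gives $d\eta=\partial\eta$; and the two evaluations — at $(X,U)$ with $X\in\mathcal{H}$, using $GU=HU=0$, $\eta(X)=0$, $\eta(U)=1$, and at $(U,V)$, using $JU=-V$ and $\eta(V)=-i$ — yield $i\sigma(JX)=-\sigma(X)$ and $i\sigma(U)+\sigma(V)=0$, which by the reality of $\sigma$ force $\sigma$ to vanish on $\mathcal{H}$ and on $\mathcal{V}$, hence on $TN=\mathcal{H}\oplus\mathcal{V}$.

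The one step you should make explicit is the identification of the global holomorphic contact form with $u-iv$. The hypothesis $f_{ij}=1$ gives a globally defined holomorphic $1$-form $\eta$, but the preliminaries of the paper only guarantee that the structure forms of the associated complex almost contact metric structure satisfy $u-iv=h\,\eta$ for some non-vanishing complex-valued function $h$; the formulas for $du$ and $dv$ that you invoke hold for these structure forms $u,v$, whereas holomorphicity (hence the $(2,0)$ type of the exterior derivative) is a property of $\eta$ and is not inherited by $u-iv$ unless $h$ is constant (or at least holomorphic). So your argument implicitly uses the normalization $u-iv=\eta$, i.e.\ that the associated metric is adapted so that $u-iv$ is itself the global holomorphic contact form. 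This is precisely the convention adopted in the paper's definition of a complex Sasakian manifold (and in Foreman's setting), and also matches the paper's formulas $u=\tfrac12(\eta+\bar\eta)$, $v=\tfrac{i}{2}(\eta-\bar\eta)$, but it should be stated as part of the setup rather than deduced from $f_{ij}=1$ alone. With that normalization made explicit, your proof is complete and is a legitimate self-contained replacement for the citation.
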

 A complex Sasakin manifold is defined as follow; 
\begin{definition}
	Let $\left(N,G,H,J,U,V,u,v,g\right) $ be a normal complex contact metric
	manifold and $\eta =u-iv$ is globally defined. If fundemental 2- forms $%
	\widetilde{G}$ and $\widetilde{H}$ is defined by 
	\begin{equation*}
	\widetilde{G}\left( X,Y\right) =du(X,Y)\text{ and \ }\widetilde{H}\left(
	X,Y\right) =dv\left( X,Y\right)
	\end{equation*}%
	then $ N $ is called a complex Sasakian manifold, where $X,Y$ are vector fields on 
	$N$.
\end{definition}
Thus we get following result;
\begin{theorem}
	Let $ N $ be a \NCM. Then $N$ is complex Sasakian if and only
	if\bigskip 
	\begin{eqnarray*}
	(\nabla _{X}G)Y&=&-2v(X)HGY-u(Y)X-v(Y)JX+g(X,Y)U+g(JX,Y)V \\
	(\nabla _{X}H)Y &=&-2u(X)HGY+u(Y)JX-v(Y)X-g(JX,Y)U+g(X,Y)V.
	\end{eqnarray*}
\end{theorem}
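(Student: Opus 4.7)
The natural plan is to start from the normality characterization of $(\nabla_{X} G) Y$ and $(\nabla_{X} H) Y$ stated in the preceding theorem and specialize it to the complex Sasakian situation. The crucial first input is Foreman's Lemma: because $\eta = u - iv$ is globally defined on a complex Sasakian manifold, the IK-form $\sigma$ vanishes identically, and hence $d\sigma = 0$ as well. Substituting $\sigma = 0$ and $d\sigma = 0$ into the long normality formulas immediately removes the $\sigma(X)HY$, $-\sigma(X) GY$ and $d\sigma(U,V)(\cdots)$ contributions, and also simplifies the auxiliary relations (\ref{nablaXU}) and (\ref{nablaUU}) to $\nabla_{X} U = -GX$, $\nabla_{X} V = -HX$, and $\nabla_{U} U = \nabla_{U} V = \nabla_{V} U = \nabla_{V} V = 0$.

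What remains is an algebraic collapse. The surviving term $v(X)\bigl(2 J Y_{0} - (\nabla_{U} J) G Y_{0}\bigr)$ in the $G$-equation, combined with $-2 v(X) J Y$, must be recognized as the single compact expression $-2 v(X) H G Y$; the mirror identity has to be established for $H$. Using the identity $HG = J + u \otimes V - v \otimes U$ from (\ref{HG,GH}) and the decomposition $Y = Y_{0} + u(Y)U + v(Y)V$, this reduces to an identity for $(\nabla_{U} J)$ on horizontal vectors. I would derive that identity by differentiating $JU = -V$ and $JV = U$ along $U$, using the parallelism $\nabla_{U} U = \nabla_{U} V = 0$ obtained in the previous paragraph, which controls $(\nabla_{U} J) Y_{0}$ in terms of $H$ on the horizontal distribution.

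For the converse, I would take the two compact formulas as hypotheses and recover the defining conditions of a complex Sasakian manifold. Specializing the $G$-formula at $Y = U$ and comparing it with the direct identity $(\nabla_{X} G) U = -G(\nabla_{X} U)$ lets one extract $\nabla_{X} U = -GX$, which in particular forces $\sigma(X) = g(\nabla_{X} U, V) = -g(GX, V) = 0$. A Koszul-type calculation, combining $(\nabla_{X} u)(Y) = g(\nabla_{X} U, Y) = -g(GX, Y) = g(X, GY)$ with the torsion-free expression for $du$, then yields $du(X, Y) = g(X, GY)$, and the parallel argument applied to $H$ gives $dv(X, Y) = g(X, HY)$. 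These are exactly the defining fundamental 2-form conditions $\widetilde{G} = du$ and $\widetilde{H} = dv$.

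The main obstacle is the algebraic reduction in the second paragraph: on a general normal complex contact metric manifold the tensor $\nabla_{U} J$ restricted to $\mathcal{H}$ is not determined by the structure tensors, so verifying that the $(\nabla_{U} J) G Y_{0}$ contribution combines with $2 J Y_{0}$ to produce precisely $-2 H G Y_{0}$ (up to the vertical correction supplied by $HG = J + u \otimes V - v \otimes U$) is the delicate step. I would test the collapse on a diagnostic pair such as $X = V$, $Y \in \mathcal{H}$ before accepting the simplified formula as correct, and then package the calculation in a single lemma that is invoked in both directions of the proof.
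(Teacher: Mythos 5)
The paper itself offers no proof of this equivalence (it is stated right after the definition, with a reference to Ishihara--Konishi), so the only thing to assess is your argument, and its central step does not go through as proposed. Your plan --- Foreman's lemma gives $\sigma=0$, substitute into the normality formulas (\ref{Yeni normallik G})--(\ref{Yeni normallik H}), then collapse the leftover terms --- is the natural route, but the collapse hinges on knowing $(\nabla_U J)$ on \emph{horizontal} vectors, and the method you propose cannot supply it: differentiating $JU=-V$ and $JV=U$ along $U$ with $\nabla_U U=\nabla_U V=0$ only yields $(\nabla_U J)U=0$ and $(\nabla_U J)V=0$, i.e.\ information about $(\nabla_U J)$ on vertical arguments, and says nothing about $(\nabla_U J)Y_0$ for $Y_0\in\mathcal{H}$, which is exactly what is needed to turn $-2v(X)JY+v(X)\left(2JY_0-(\nabla_U J)GY_0\right)$ into $-2v(X)HGY$ (and similarly in the $H$-equation). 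Since, as you yourself note, $\nabla_U J|_{\mathcal H}$ is not determined by the structure tensors on a general normal complex contact metric manifold, and the complex Sasakian hypothesis (global $\eta$, hence $\sigma=0$ and $du=\widetilde G$, $dv=\widetilde H$) does not visibly control it either, the needed identity must be imported from elsewhere --- e.g.\ from Korkmaz's original form of the normality equations (which is phrased with a $-2v(X)g(HGY,Z)$ term and no $\nabla_U J$ at all) or from Foreman's results on strict complex contact structures. Your own diagnostic test $X=V$, $Y\in\mathcal H$ already exposes the problem: it forces $(\nabla_U J)GY_0=2JY_0$, i.e.\ $(\nabla_U J)=+2H$ on $\mathcal H$, while the formula $(\nabla_X J)Y=-2u(X)HY+2v(X)GY$ recorded after the theorem gives $(\nabla_U J)=-2H$; this sign tension must be resolved, not assumed away.

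The converse also has a gap. Specializing the $G$-formula at $Y=U$ and comparing with $(\nabla_X G)U=-G(\nabla_X U)$ does \emph{not} extract $\nabla_X U=-GX$: since $G$ annihilates the vertical distribution, this comparison determines only the horizontal part of $\nabla_X U$, and the component $g(\nabla_X U,V)=\sigma(X)$ is invisible (indeed the $U$- and $V$-components of both sides agree identically for any $\sigma$), so $\sigma=0$ is not forced this way --- the argument is circular. To kill $\sigma$ one must compare horizontal components: the normality formula carries the term $\sigma(X)HY$ which is absent from the hypothesized formula, so subtracting the two and choosing suitable $X,Y$ (and handling the $d\sigma(U,V)$ term for vertical $X$) yields $\sigma=0$, after which the structure equations $du(X,Y)=g(X,GY)+(\sigma\wedge v)(X,Y)$ and $dv(X,Y)=g(X,HY)-(\sigma\wedge u)(X,Y)$ give the defining complex Sasakian conditions. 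As written, both directions of your proposal rest on steps that fail.
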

This result was also given by  Ishihara-Konishi\cite{ishihara1980complex}. But should have been considered that $ N $ is not K\"ahler in here. So,we have 
\begin{eqnarray*}
	(\nabla _{X}J)Y &=&-2u(X)HY+2v(X)GY.
\end{eqnarray*}
From (\ref{nablaXU} )  on a complex Sasakin manifold we get 
\begin{eqnarray}
\nabla _{X}U =-GX,~~ \ \ \ \nabla _{X}V =-HX. \label{NablaX-U}
\end{eqnarray}
On the other hand we have 
\begin{corollary}
	An IK-normal complex contact metric manifold could not be complex Sasakian \cite{turgut2018h}. 
\end{corollary}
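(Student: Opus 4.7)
The plan is to argue by contradiction, exploiting the incompatibility between the Kähler condition forced by IK-normality and the explicit formula for $\nabla J$ that holds on a complex Sasakian manifold.

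First, I would assume that $N$ is simultaneously IK-normal (i.e.\ $S=T=0$) and complex Sasakian. Using the fact already noted in the excerpt that IK-normality implies $N$ is Kähler, I would record the identity $\nabla_X J=0$ for every vector field $X$ on $N$. This is the statement I intend to contradict.

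Next, I would invoke the formula derived just before the corollary, namely
\begin{equation*}
(\nabla_X J)Y = -2u(X)HY + 2v(X)GY,
\end{equation*}
which is valid on any complex Sasakian manifold. Specializing $X=U$ and using $u(U)=1$, $v(U)=0$ from the preliminaries, I obtain $(\nabla_U J)Y = -2HY$. Taking $X=V$ gives $(\nabla_V J)Y = 2GY$. The Kähler condition would then force $HY = 0$ and $GY=0$ for every vector field $Y$.

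Finally, I would push this down to an outright contradiction using the tensorial relations in Definition~\ref{complexalmostscontact}: from $H^2 = G^2 = -I + u\otimes U + v\otimes V$, the vanishing of $G$ (or $H$) would give $0 = -I + u\otimes U + v\otimes V$, which evaluated on any horizontal vector $Y \in \mathcal H$ yields $0 = -Y$, impossible since $\mathcal H$ is non-trivial. This contradiction proves that the two structures cannot coexist. The main obstacle is purely conceptual rather than computational: one has to be confident in citing the \emph{IK-normal $\Rightarrow$ Kähler} implication, since everything else reduces to substituting $X=U$ (or $X=V$) into an already established identity and reading off the conclusion.
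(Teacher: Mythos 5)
Your proposal is correct, but note that the paper itself does not prove this corollary: it simply imports the statement from \cite{turgut2018h}, so there is no internal proof to match. Your route is the natural self-contained derivation from facts the paper does state: IK-normality forces the K\"ahler condition $\nabla J=0$ (asserted, without proof, in the paragraph following the definition of IK-normality), while the complex Sasakian identity $(\nabla _{X}J)Y=-2u(X)HY+2v(X)GY$ with $X=U$ (so $u(U)=1$, $v(U)=0$) gives $(\nabla _{U}J)Y=-2HY$, hence $H\equiv 0$, and likewise $X=V$ gives $G\equiv 0$; this contradicts $H^{2}=G^{2}=-I+u\otimes U+v\otimes V$ evaluated on a nonzero horizontal vector. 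Every step checks out, and the contradiction is genuine. Two caveats worth recording: first, the entire weight of the argument rests on the implication \emph{IK-normal $\Rightarrow$ K\"ahler}, which in this paper is only a quoted fact (it goes back to the observation, due to Korkmaz, that Ishihara--Konishi normality is strong enough to force the underlying Hermitian structure to be K\"ahler), so your proof is exactly as self-contained as that citation; second, you implicitly use that the horizontal distribution $\mathcal{H}$ is nontrivial, i.e.\ $p\geq 1$, which is the standing (if unstated) assumption in complex contact geometry. With those provisos, your argument is a valid and arguably more informative substitute for the paper's bare citation.
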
 
 This result support that the geometry of complex Sasakian manifolds has some different properties. 
\section{Curvature Properties of Complex Sasakian Manifolds}
In the Riemannian geometry of contact manifolds curvature properties have an important position. We use these relations for future works. The curvature relations of a complex almost contact metric manifolds were given in \cite{foreman1996variational}, an IK-Normal manifold were given in \cite{ishihara1980complex} and a normal complex contact metric manifold were given in \cite{korkmaz2000,vanli2015curvature}. In this section by taking advantage from these curvature properties, we present curvature relations for complex Sasakian manifold.
Let $ N $ be a complex Sasakian manifold. Then for $ X,Y \in \Gamma(TN) $ we have, 
\begin{align}
R\left( U,V\right) V&=R\left( V,U\right) U=0 \label{SasakianR(UVV)} \\
R(X,U)U&=X+u(X)U+v(X)V	\label{SasakianR(XUU)} \\
R(X,V)V&=X-u(X)U-v(X)V	\label{SasakianR(XVV)} \\
R(X,U)V&=-3JX-3u(X)V+3v(X)U \label{SasakianR(XUV)}\\
R(X,V)U&=0 \label{SasakianR(XVU)}\\
R(X,Y)U&=v(X)JY-v(Y)JX+2v(X)u(Y)V-2v(Y)u(X)V  \label{SasakianR(XYU)}\\
&+u(Y)X-u(X)Y-2g(JX,Y)V \notag \\
R(X,Y)V&=3u(X)JY-3u(Y)JX-2u(X)v(Y)U+2u(Y)v(X)U \label{SasakianR(XYV)} \\
&+v(Y)X-v(X)Y+2g(JX,Y)U \notag \\
R(U,V)X&=JX+u(X)V-v(X)U\label{SasakianR(UVX)} \\	R(X,U)Y&=-2v(Y)v(X)U+2u(Y)v(X)V-g(Y,X)U \label{SasakianR(XUY)}\\
&+u(Y)X+g(JY,X)V  \notag \\
R(X,V)Y&=3u(Y)JX+2u(Y)u(X)V+3g(JY,X)U\label{SasakianR(XVY)} \\&-2v(Y)u(X)U-g(Y,X)V+v(Y)X-2u(X)JY \notag .
\end{align}
 For $X,Y,Z,W$ horizontal vector fields we have \cite{vanli2015curvature} 
\begin{equation}\label{r(gx,gy,gz,gw)}
g(R(GX,GY)GZ,GW)=g(R(HX,HY)HZ,HW)=g(R(X,Y)Z,W)
\end{equation}
and we have 
	\begin{align*}
g(R(X,GX)GX,X)&+g(R(X,HX)HX,X)+g(R(X,JX)JX,X)=-6g\left( X,X\right)\\
g(R(X,GX)Y,GY) &=g(R(X,Y)X,Y)+g(R(X,GY)X,GY)-2g(GX,Y)^{2} \\	
&-4g(HX,Y)^{2}-2g(X,Y)^{2} +2g(X,X)g(Y,Y)-4g(JX,Y)^{2}\\
g(R(X,HX)Y,HY) &=g(R(X,Y)X,Y)+g(R(X,HY)X,HY)-2g(HX,Y)^{2} \\
&-4g(GX,Y)^{2}-2g(X,Y)^{2}+2g(X,X)g(Y,Y)-4g(JX,Y)^{2} \\
g(R(X,HX)JX,GX)&=-g(R(X,HX)HX,X)-4g(X,X)^{2}\\
g(R(X,JX)HX,GX)&=g(R(X,JX)JX,X)-2g(X,X)^{2}.
\end{align*}
\begin{align*}
g(R(GX,HX)HX,GX)&=g(R(X,JX)JX,X) \\
g(R(GX,JX)JX,GX)&=g(R(X,HX)HX,X)\\
g(R(JX,JY)JY,JX)&=g(R(X,Y)Y,X)\\
g(R(X,Y)JX,JY)&=g(R(X,Y)Y,X)+4g(X,GY)^{2}+4g(X,HY)^{2}\\
g(R(Y,JX)JX,Y)&=g(R(X,JY)JY,X) \\
g(R(X,JY)JX,Y)&=g(R(X,JY)JY,X)+4g(X,HY)^{2}+4g(X,GY)^{2} \\
g(R(X,JX)JY,Y)&=-g(R(JX,JY)X,Y)-g(R(JY,X)JX,Y) \\
g(R(X,JX)JY,Y)&=g(R(X,Y)Y,X)+g(R(X,JY)JY,X)\\
&+8\left(g(X,GY)^{2}+g(X,HY)^{2}\right). 
\end{align*}
We have an nice relation as follow;
\begin{corollary}
	For a unit horizontal vector $ X $ on $N$ we have
	\begin{equation}
	k\left( X,GX\right) +k\left( X,HX\right) +k\left( X,JX\right) =6. \label{sectionalrelation}
	\end{equation}
\end{corollary}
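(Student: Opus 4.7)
The plan is to reduce the identity to the previously displayed curvature relation
\[
g(R(X,GX)GX,X)+g(R(X,HX)HX,X)+g(R(X,JX)JX,X)=-6g(X,X),
\]
by showing that each of the three planes $\operatorname{span}\{X,GX\}$, $\operatorname{span}\{X,HX\}$, $\operatorname{span}\{X,JX\}$ is spanned by an orthonormal pair, so that the denominator in the sectional curvature is $1$ and only the numerator survives (up to the global sign fixed by the paper's convention for $k$).

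First I would verify, using the horizontality assumption $u(X)=v(X)=0$, that each of $GX$, $HX$, $JX$ lies in $\mathcal{H}$: the relations $u_i G_i=v_i G_i=u_i H_i=v_i H_i=0$ listed in (\ref{HG,GH}) take care of $GX$ and $HX$, while $u\circ J=-v$ and $v\circ J=u$ take care of $JX$. Next I would show that these three vectors have unit length. For $G$, the identity (\ref{G^2veH^2}) combined with $g(GX,Y)=-g(X,GY)$ gives
\[
g(GX,GX)=-g(X,G^{2}X)=g(X,X)-u(X)^{2}-v(X)^{2}=1,
\]
and the same computation (using $H^{2}=-I+u\otimes U+v\otimes V$ and the antisymmetry of $H$) works for $HX$; for $J$ one simply uses that $J$ is an isometry of the Hermitian metric $g$.

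Then I would establish orthogonality. The skew-symmetries $g(X,GX)=-g(GX,X)$, $g(X,HX)=-g(HX,X)$, and $g(X,JX)=-g(JX,X)$ force all three inner products to vanish. Consequently
\[
g(X,X)g(GX,GX)-g(X,GX)^{2}=1,
\]
and likewise for the other two planes, so each sectional curvature is simply the numerator
\[
k(X,GX)=-g(R(X,GX)GX,X),
\]
and the analogous expressions for $k(X,HX)$ and $k(X,JX)$, where the minus sign matches the sign convention under which the identity preceding the corollary is stated.

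Finally I would add the three sectional curvatures and apply the stated curvature relation, obtaining
\[
k(X,GX)+k(X,HX)+k(X,JX)=-(-6g(X,X))=6,
\]
since $g(X,X)=1$. The only potential obstacle is bookkeeping the sign convention for the sectional curvature relative to the paper's convention for $R$; once that is pinned down from the displayed identity, the rest is linearity and the horizontality computations above.
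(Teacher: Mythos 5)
Your reduction is the intended one: the paper gives no separate argument, and the corollary is meant to follow from the first identity in the list preceding it once one knows that for a unit horizontal $X$ the pairs $\{X,GX\}$, $\{X,HX\}$, $\{X,JX\}$ are orthonormal; your verification of horizontality, of unit length via $G^{2}=H^{2}=-I+u\otimes U+v\otimes V$ together with skew-symmetry, and of orthogonality is exactly the required bookkeeping.

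The one point you should not leave to a convention choice is the sign. You resolve it by declaring $k(X,Y)=-g(R(X,Y)Y,X)$, but that contradicts the paper's own usage: from (\ref{SasakianR(XUU)}) one has $g(R(X,U)U,X)=1$ for unit horizontal $X$, and the corollary immediately after this one asserts $k(X,U)=1$, so the paper is computing sectional curvature as $k(X,Y)=g(R(X,Y)Y,X)$ for orthonormal pairs; the same convention is needed for $k(X,JX)=\mathcal{GH}(X)+3$ later on. Under that convention the displayed identity, read literally, would give $k(X,GX)+k(X,HX)+k(X,JX)=-6$, not $6$. The consistent reading is that the quoted identity carries the opposite sign (its right-hand side should be $+6g(X,X)$ in the convention where $k(X,U)=1$, or equivalently its quadruple is the opposite $(0,4)$ convention); a check on $\mathbb{CP}^{2p+1}$ with the Fubini--Study metric, where $k(X,GX)=k(X,HX)=1$ and $k(X,JX)=4$ for horizontal $X$, confirms that $6$ is the correct total. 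So rather than redefining $k$ to absorb the minus sign, flag the sign of the quoted identity; with that correction your argument goes through verbatim.
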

This relation also valid for an IK-normal complex contact metric manifold \cite{imada2014construction}. 
\begin{corollary}
	Let $N$ be a complex Sasakian manifold and $ X $ be a unit  horizontal vector field on $N$. Then for the sectional curvature $ k $ we have 
	\begin{equation*}
	k(U,V)=0\, \ \ \text{and} \ \ \ k(X,U)=1.
	\end{equation*}
\end{corollary}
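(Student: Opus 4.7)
The plan is to obtain both identities by directly feeding the relevant curvature formulas that were already established earlier in the section into the definition of sectional curvature
\[
k(X,Y)=\frac{g(R(X,Y)Y,X)}{g(X,X)\,g(Y,Y)-g(X,Y)^{2}}.
\]
Everything reduces to computing one inner product in each case, so no new curvature identities are needed.

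For $k(U,V)$, I would first observe that $U$ and $V$ are unit vertical vector fields with $g(U,V)=0$ (this is part of the structure, coming from $u(U)=v(V)=1$ and $u(V)=v(U)=0$ together with the complex almost contact metric relations). Hence the denominator equals $1$. The numerator is $g(R(U,V)V,U)$, and equation (\ref{SasakianR(UVV)}) states $R(U,V)V=0$, so $k(U,V)=0$.

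For $k(X,U)$, since $X$ is a unit horizontal vector we have $u(X)=v(X)=0$ and $g(X,U)=0$, so once again the denominator is $1$. For the numerator I apply equation (\ref{SasakianR(XUU)}):
\[
R(X,U)U=X+u(X)U+v(X)V=X,
\]
using horizontality of $X$. Therefore $g(R(X,U)U,X)=g(X,X)=1$, giving $k(X,U)=1$.

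There is no real obstacle here; the corollary is a direct application of the specialized formulas (\ref{SasakianR(UVV)}) and (\ref{SasakianR(XUU)}). The only thing to keep track of is bookkeeping: verifying that the horizontal/vertical orthogonality relations make the denominator of the sectional curvature equal to $1$ in both cases, which is immediate from the decomposition \eqref{TN=H+V} and the definitions of $u,v,U,V$.
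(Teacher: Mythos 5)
Your proposal is correct and is exactly the intended argument: the corollary follows immediately by plugging the displayed identities (\ref{SasakianR(UVV)}) and (\ref{SasakianR(XUU)}) into the definition of sectional curvature, with the orthonormality of $U$, $V$ and a unit horizontal $X$ making each denominator equal to $1$. This matches the paper's (implicit) derivation, so nothing further is needed.
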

Turgut Vanl\i\  and Unal \cite{vanli2015curvature} presented properties of Ricci curvature tensor
of a normal{\color{white}$ \theta $}complex{\color{white}$ \theta $}contact{\color{white}$ \theta $} metric manifold. For complex Sasakian case 
we have following relations; 
 
	\begin{eqnarray} \label{Ricciler}
		\rho (U,U) &=&\rho (V,V)=4p,\text{ \ }\rho (U,V)=0\\
		\rho (X,U) &=&4pu(X)  \notag,\ 
		\rho \left( X,V\right) =4pv(X)  \notag\\
		\rho (X,Y) &=&\rho (GX,GY)+4p\left( u(X)u(Y)+v(X)v(Y)\right) 
		\label{Ricci(X,Y)=Ricci(GX,GY)+} \notag\\
		\rho (X,Y) &=&\rho (HX,HY)+4p\left( u(X)u(Y)+v(X)v(Y)\right)  \notag
	\end{eqnarray}
where $ X,Y \in \Gamma(TN) $.\par 
%In the Korkmaz's work the complex contact form is not globally defined i.e there is an open covering of $N$ and the $ 1- $forms $ u,v $, $ (1,1)- $tensor field $ G,H $ are defined locally. For complex Sasakian manifolds the $ 1- $forms $ u,v $ and $ (1,1)- $tensor field $ G,H $ are globally define i.e they are the same all coordinate neighborhoods of the open covering. Therefore the $ \mathcal{GH}-$sectional which is the complex analogue of $ \phi- $sectional curvature of Sasakian manifolds is defined as follow. 
The sectional curvature of Riemann manifold give us important information about the geometry of the manifold. In complex manifold we have holomorphic sectional curvature which is the curvature of section is spanned by $ X $  and $ JX $. Similarly in contact manifold we have $ \phi- $sectional curvature which is the curvature of section is spanned by $ X $  and $ \phi X $ \cite{blair2010riemannian}. For complex contact case we have sectional curvature, holomorphic sectional curvature and $ \mathcal{GH}- $sectional curvature which was given by Korkmaz \cite{korkmaz2000} as below: 
\begin{definition}
	\cite{korkmaz2000} Let $ N $ be a \NCM. $X$ be an unit horizontal vector field  on $N$ and $a^{2}+b^{2}=1$.  A $\mathcal{GH-}$section is a plane which is spanned by $X$ and $Y=aGX+bHX$ and the sectional curvature of this plane is called $\mathcal{GH-}$\textit{sectional curvature } is defined by $%
	\mathcal{GH}_{a,b}\left( X\right) =k\left( X,aGX+bHX\right) ,$ where $%
	k(X,Y)$\ is the sectional curvature of the plane section spanned by $X$ and $Y$.
\end{definition}
$\mathcal{GH-}$\textit{sectional curvature } is denoted by $ \mathcal{GH}_{a,b}  $ and we assume that it does not depend the choice of $a$ and $b$. So we
will use $\mathcal{GH}\left( X\right) $ notation. \par

Let $ N $ be a complex Sasakian manifold. Since the complex contact form is globally defined $ \mathcal{GH}_{a,b}  $ does not depend the choice of $a$ and $b$ in naturally. Also we have \cite{korkmaz2000}
\begin{equation}
k(X,JX)=\mathcal{GH}\left( X\right) +3.  \label{k(X,JX)=GH(X)+3}
\end{equation}
Thus from (\ref{sectionalrelation}) we obtain
	\begin{equation}
k\left( X,GX\right) +k\left( X,HX\right) +\mathcal{GH}\left( X\right) =3. 
\end{equation}
%\begin{definition}
%Let $ N $ be $\left( 4n+2\right) -$dimensional complex Sasakian manifold. For
%any horizontal vector field $X$, the plane section generated by $X$ and $%
%Y=GX$ \textit{GH-section }and we
%define \textit{GH-sectional curvature} $\mathcal{GH}\left( X,Y\right)
%=K\left( X,GX\right) ,$ where $K(X,Y)$\ is the sectional curvature of
%the plane section spanned by $X$ and $Y$. 
%\end{definition}
%Also we get the sectional curvature of  $ GH- $section is spanned by $ X,Y=HX $ is that $ K(X,HX)=K(X,GX) $. Under the assumption GH-sectional curvature is independent of the choice of the GH- section at each point Korkmaz obtain a relation between $ \mathcal{GH}-$sectional curvature and holomorphic sectional curvature. This relation is also valid for complex Sakaian manifolds.
%\begin{proposition} \label{GH(X)+3=K(X,JX)}
%	Let $\mathcal{GH}(X) = c $ where $  $c does not depend on X. Then we have 	$ K(X, JX) = c + 3 $.
%\end{proposition}
%If the $ \mathcal{GH}- $sectional curvature of a complex Sasakian manifold is constant then we recall this manifold by complex Sasakian space form. The Riemannian curvature and Ricci curvature of complex Sasakian space form consider the complex contact form is globally defined. 
\begin{example}
	The well known example of normal complex contact metric manifold complex Heisenberg group. A globally defined complex contact form and complex almost contact structure on complex Heisenberg group was given by Baikousis et al. \cite{baikoussis1998holomorphic}. Korkmaz obtained normality of complex Heisenberg group. Thus a complex Heisenberg group is an example of complex Sasakian manifolds. For details about complex Heisenberg group see \cite{blair2006corrected,foreman2000complex,korkmaz2000,vanli2015curvature}.   
\end{example}
\begin{example}
	An other example of complex Sasakian manifolds were given by Foreman \cite{foreman2000complex}. Foreman obtained an example from hyperk\"ahler manifold. For details see \cite{foreman2000complex}. 
\end{example}
\section{Flatness on complex Sasakian manifolds}
A real Sasakian manifold can not be flat, i.e its Riemannian curavture could not be zero identically \cite{de2009complex}. We obtain same results for complex Sasakin manifolds. 
\begin{theorem}
	A complex Sasakian manifold can not be flat.
\end{theorem}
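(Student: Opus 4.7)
The plan is to derive a contradiction directly from the curvature identities already established for complex Sasakian manifolds. Assume for contradiction that $N$ is flat, i.e.\ $R(X,Y)Z = 0$ for all vector fields $X,Y,Z$ on $N$. I would then look for a single curvature evaluation among the formulas \eqref{SasakianR(UVV)}--\eqref{SasakianR(XVY)} whose right-hand side is manifestly nonzero, and read off the contradiction.

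The cleanest choice is equation \eqref{SasakianR(XUU)}, which gives
\[
R(X,U)U = X + u(X)U + v(X)V.
\]
First I would specialize to a nonzero horizontal vector field $X \in \mathcal{H}$, so that $u(X) = v(X) = 0$ by the definition of $\mathcal{H}$. Then $R(X,U)U = X \neq 0$, which contradicts the assumption $R \equiv 0$. (Such an $X$ exists since $\dim \mathcal{H} = 4p > 0$ on any complex contact manifold of complex dimension $2p+1 \geq 1$.)

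An equivalent packaging of the same argument uses the sectional curvature corollary immediately preceding the theorem: for a unit horizontal $X$ one has $k(X,U) = 1 \neq 0$, which again is incompatible with $R \equiv 0$. I would probably mention both formulations, since the second gives a quantitative strengthening—every horizontal $2$-plane containing $U$ has sectional curvature exactly $1$, mirroring the real Sasakian situation where such planes have curvature $1$ and hence the manifold cannot be flat \cite{de2009complex}.

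There is no real obstacle here: the whole theorem is a one-line consequence of identity \eqref{SasakianR(XUU)}. The only thing worth double-checking is that the identity \eqref{SasakianR(XUU)} itself does not secretly use the flatness hypothesis—but it was derived using only $\nabla_X U = -GX$ from \eqref{NablaX-U} and the normality formulas, none of which involve $R$, so the argument is safely non-circular.
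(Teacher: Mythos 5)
Your proof is correct, and it takes a slightly different route from the paper's. The paper argues through the Ricci tensor: assuming $R\equiv 0$ forces $\rho\equiv 0$, which contradicts the identity $\rho(X,U)=4p\,u(X)$ from \eqref{Ricciler} (the paper writes ``$4nu(X)=0$''; taking $X=U$ gives $4p=0$, impossible). You instead read the contradiction directly off the full curvature identity \eqref{SasakianR(XUU)}, $R(X,U)U=X$ for nonzero horizontal $X$, without ever contracting to the Ricci tensor. Both are one-line consequences of the Section 3 curvature package, so neither is deeper than the other; your version is marginally more elementary (no trace needed) and gives the sharper quantitative statement $k(X,U)=1$ that you mention, while the paper's Ricci route has the small advantage of not needing to exhibit a nonzero horizontal vector (it only evaluates at $U$), though for any genuine complex contact manifold one has $p\geq 1$ and $\dim\mathcal{H}=4p>0$, so your existence remark is harmless. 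Your observation that \eqref{SasakianR(XUU)} is derived from \eqref{NablaX-U} and normality, hence the argument is non-circular, applies equally to the paper's use of \eqref{Ricciler} and is worth stating in either formulation.
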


\begin{proof}
	Suppose that a complex Sasakian manifold is flat. Then $R(X,Y)Z=0$
	and from that $\rho \left( X,Y\right) =0.$ On the other hand from (\ref{Ricciler}) we have 
	\begin{equation*}
	4nu(X)=0.
	\end{equation*}
	This is not possible. So the manifold can not be flat.
\end{proof}
 Shaikh and Kundu \cite{shaikh2018some} generalized curvature tensors and gave the definition of B-tensor. They proved some equivalence relations between most of certain curvature conditions. 
\begin{definition}
	$N$ be a complex Sasakian manifold . $ (0, 4) $ tensor $ B $ of $N$ is given by
	\begin{eqnarray} \label{B-TENSOR}
	B(X,Y,Z,W) &=& a_{0}R(X,Y,Z,W) + a_{1}R(X,Z,Y,W)\\
	&+& a_{2}\rho(Y,Z)g(X,W)+a_{3}\rho(X,Z)g(Y,W)\notag\\
	&+&a_{4}\rho(X,Y)g(Z,W)+a_{5}\rho(X,W)g(Y,Z)\notag \\
	&+&a_{6}\rho(Y,W)g(X,Z)+a_{7}\rho(Z,W)g(X,Y) \notag \\
	&+& \tau\{a_{8}g(X,W)g(Y,Z)+a_{9}g(X,Z)g(Y,W) \notag\\
	&+&_{10}g(X,Y)g(Z,W)\} \notag
	\end{eqnarray}
	where $ a_{i} $'s are scalars on $  M $ and $ X,Y,Z,W \in \Gamma(TN) $.
\end{definition}
For different value of $ a_{i} $, $ B $ is became projective, conformal, concircular, quasi-conformal and conharmonic etc. curvature tensors (see \cite{shaikh2018some}). Therefore flatness of B-tensor also determine flatness of these tensors. Let's define $N$ is $ B-$flat if $ B=0 $. Then we have following. 

\begin{theorem}
	A complex Sasakian manifold can not be $ B-$flat.   
\end{theorem}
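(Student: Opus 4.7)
The plan is to argue by contradiction. Suppose $N$ is $B$-flat, so $B \equiv 0$. I will derive $a_0 = 0$ and $a_1 = 0$; this contradicts the fact that the specialisations listed after (\ref{B-TENSOR}) (projective, conformal, concircular, conharmonic, quasi-conformal, etc.) all require $a_0\ne 0$, so $B$ can no longer coincide with any genuine curvature tensor.

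The key move is to feed $B$ a 4-tuple on which every one of the nine $\rho$-and-$g$ product terms in (\ref{B-TENSOR}) vanishes, isolating the pure curvature contribution $a_0R+a_1R$. For a unit horizontal vector field $X$ I take the tuple $(X,U,V,JX)$. Because $\mathcal{H}$ is $J$-invariant, $JX\in\mathcal{H}$, and the almost contact structure gives $g(X,U)=g(X,V)=g(JX,U)=g(JX,V)=g(U,V)=g(X,JX)=0$. By (\ref{Ricciler}) the Ricci tensor also vanishes on every mixed pair that appears, namely $\rho(X,U)=\rho(X,V)=\rho(JX,U)=\rho(JX,V)=\rho(U,V)=0$. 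Consequently every $a_i$ term with $i\in\{2,\dots,7\}$ and every $\tau$ term in $B(X,U,V,JX)$ is zero, and we are left with
\[
B(X,U,V,JX)=a_0\,R(X,U,V,JX)+a_1\,R(X,V,U,JX).
\]
Now I invoke the complex-Sasakian identities. By (\ref{SasakianR(XUV)}) applied to horizontal $X$, $R(X,U)V=-3JX$, so $R(X,U,V,JX)=-3g(JX,JX)=-3$; by (\ref{SasakianR(XVU)}), $R(X,V)U=0$, so $R(X,V,U,JX)=0$. Hence $B(X,U,V,JX)=-3a_0$, forcing $a_0=0$. Repeating the identical calculation on $(X,V,U,JX)$ keeps all $\rho$- and $g$-terms zero for the same horizontality reasons but swaps the roles of $a_0$ and $a_1$, producing $B(X,V,U,JX)=-3a_1=0$, so $a_1=0$.

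With $a_0=a_1=0$, $B$ retains no Riemann-curvature content and reduces to a combination of Ricci-metric and metric-metric products. In particular $B$ can no longer model any of the classical curvature tensors singled out above, contradicting the very purpose of the $B$-tensor. I expect the main obstacle to be identifying the right test tuple: it must simultaneously kill every $\rho\cdot g$ and $g\cdot g$ term by orthogonality and by the Ricci vanishing in (\ref{Ricciler}), yet still produce a nonzero mixed curvature value. Identities (\ref{SasakianR(XUV)}) and (\ref{SasakianR(XVU)}) are precisely the complex-Sasakian ingredients that make $(X,U,V,JX)$ do both jobs; once this tuple is chosen, the remainder is a one-line direct computation.
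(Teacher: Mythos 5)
Your computation is correct, and it is a genuinely different route from the paper's. The paper substitutes $Y=Z=U$, $X=W\in\mathcal{H}$ into (\ref{B-TENSOR}) and uses (\ref{SasakianR(XUU)}) and (\ref{Ricciler}) to force $\rho(X,X)$ to be a constant (dividing by $a_5$), then substitutes an orthonormal horizontal pair $X=W$, $Y=Z$ to force all horizontal sectional curvatures to be one constant (dividing by $a_0+a_1$), which contradicts $k(X,JX)=\mathcal{GH}(X)+3$ from (\ref{k(X,JX)=GH(X)+3}). You instead evaluate $B$ on the mixed tuples $(X,U,V,JX)$ and $(X,V,U,JX)$, where $J$-invariance of $\mathcal{H}$, $g(U,V)=0$ and (\ref{Ricciler}) kill every Ricci and metric product term, and read off $a_0=a_1=0$ from (\ref{SasakianR(XUV)}) and (\ref{SasakianR(XVU)}); this buys you an argument with no division by coefficient combinations and no appeal to the $\mathcal{GH}$-sectional curvature relation. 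Incidentally, your conclusion survives the internal discrepancy in the paper's curvature list: (\ref{SasakianR(XYU)}) evaluated at $Y=V$ gives $R(X,V)U=-JX$ for horizontal $X$ rather than $0$ as in (\ref{SasakianR(XVU)}); with that value your two evaluations become $3a_0+a_1=0$ and $a_0+3a_1=0$, which still force $a_0=a_1=0$.

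The step you must tighten is the final contradiction. $B$-flatness means $B=0$ for some fixed scalars $a_i$, and the theorem as stated puts no restriction on them; what you have proved is that $B=0$ forces $a_0=a_1=0$, which contradicts $B$-flatness only under the extra hypothesis $(a_0,a_1)\neq(0,0)$. An appeal to ``the purpose of the $B$-tensor'' is not a hypothesis, so state the nondegeneracy explicitly (it does hold for every specialisation listed after (\ref{B-TENSOR})), and note that with $a_0=a_1=0$ your computation says nothing about the residual Ricci--metric part of $B$. To be fair, the paper's own proof carries analogous implicit assumptions ($a_5\neq 0$ and $a_0+a_1\neq 0$), and the literal statement is false if all $a_i$ vanish, so your version is no weaker --- arguably its hypothesis is the more natural one --- but the assumption should be visible rather than rhetorical.
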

\begin{proof}
	Assume that $ B=0 $. Let chose $Y=Z=U$ and $ X=W=X\in \mathcal{H} $ then we have 
	\begin{eqnarray*}
		0 &=& a_{0}R(X,U,U,X) + a_{1}R(X,U,U,X) \\
		&+&a_{2}\rho(U,U)g(X,X)+a_{5}\rho(X,X)g(U,U)\\
		&+& \tau\{a_{8}g(X,X)g(U,U)\}.
	\end{eqnarray*}
	and from (\ref{SasakianR(XUU)}) and (\ref{Ricciler})  we get
\begin{align}
\rho(X,X)=-\frac{a_0+a_1+4pa_2+\tau a_8}{a_5} \label{B-tensörro(X,X)}.
\end{align}
	Therefore, since $ \rho(X,X) $ is constant $ \rho(X,X)=\rho(Y,Y) $. 
	On the other hand for unit and orthogonal horizontal vector fields $ X,Y $ by choosing   $Y=Z=Y \in \mathcal{H}$ and $ X=W=X\in \mathcal{H} $ under $B=0 $ condition we have 
	\begin{equation*}
	R(X,Y,Y,X)=-\frac{a_{2}\rho(Y,Y)+a_{5}\rho(X,X)+a_{8}\tau}{a_{0}+a_{1}}
	\end{equation*}
	and thus we get 
		\begin{align*}
	R(X,Y,Y,X)=\frac{a_{2}a_{0}+a_{1}a_{2}+a_{0}a_{5}+a_{1}a_{5}
		+4pa_{2}a_{2}^2+4pa_{2}a_{5}+\tau a_{8}a_{2}}{a_{5}(a_{0}+a_{1})}.
	\end{align*}
	This shows us the sectional curvature is independent of $ X $ and $ Y $ and then $ k(X,JX)=\mathcal{GH}(X) $. But from (\ref{k(X,JX)=GH(X)+3}) there is a contradiction. So $ N $ could not be $ B- $flat. 
\end{proof}
In \cite{turgut2017conformal} two of presented authors showed that a normal{\color{white}$ \theta $}complex{\color{white}$ \theta $}contact{\color{white}$ \theta $}manifold is not be conformal,concircular, quasi-conformal and conharmonic flat.
 The other notion of flatness is $ \phi-T $-flatness for a $(1,1)-  $tensor $ T $. Now we examine this notion for a complex Sasakian manifold. 
\begin{definition}
	Let $N$ be a complex Sasakian manifold. If 
	\begin{equation}
	G^{2}(\mathcal{B}(GX,GY)GZ)=0\text{ and }H^{2}(\mathcal{B}(HX,HY)HZ)=0
	\label{gh-conformal condition}
	\end{equation}%
	for $T,Y,Z$ vector fields on $N$ then $N$ is called a $\mathcal{GH}-$ $ \mathcal{B}-$flat.
\end{definition}
%From (\ref{g(R(GX,GY)GZ,GW)=g(R(X,Y)Z,W)}) and (\ref{B-TENSOR}) we have: 
\begin{theorem}
	A complex Sasakian manifold can not be $\mathcal{GH}-$ $ \mathcal{B}-$flat.  
\end{theorem}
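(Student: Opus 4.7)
The strategy is to reduce the $\mathcal{GH}$-$\mathcal{B}$-flatness hypothesis to a purely horizontal vanishing identity for $B$, and then to mimic the preceding $\mathcal{B}$-flat proof, obtaining a contradiction this time from the mismatch between $k(X,GX)=\mathcal{GH}(X)$ and the identity (\ref{k(X,JX)=GH(X)+3}).

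First I would unpack what $G^{2}\bigl(\mathcal{B}(GX,GY)GZ\bigr)=0$ really says. Using (\ref{G^2veH^2}) together with $GU=GV=0$, the image of $G$ lies in the horizontal distribution $\mathcal{H}$, and on $\mathcal{H}$ the tensor $G^{2}$ acts as $-I$. Consequently the hypothesis is equivalent to $\mathcal{B}(GX,GY)GZ\in\mathcal{V}$ for all $X,Y,Z$. Pairing with a horizontal $W$ gives $B(GX,GY,GZ,W)=0$, and since $G|_{\mathcal{H}}$ is invertible with inverse $-G$, substituting $X\mapsto -GX,\ Y\mapsto -GY,\ Z\mapsto -GZ$ collapses the hypothesis into
\[
B(X,Y,Z,W)=0\quad\text{for all }X,Y,Z,W\in\mathcal{H}.
\]

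Next I would specialise. Fix unit horizontal $X$ and a unit horizontal $Y$ with $g(X,Y)=0$, and insert them into $B(X,Y,Y,X)$ via the expansion (\ref{B-TENSOR}). Orthonormality annihilates every $g(X,Y)$ cross term, and from (\ref{Ricciler}) one has $\rho(GX,GX)=\rho(HX,HX)=\rho(X,X)$ for horizontal $X$, with $\rho(JX,JX)$ coinciding with $\rho(X,X)$ as well (since $JX$ is horizontal and $GJX=HX$). The identity therefore reduces to
\[
0=(a_{0}+a_{1})\,k(X,Y)+a_{2}\rho(Y,Y)+a_{5}\rho(X,X)+\tau a_{8}.
\]
Applying this once with $Y=GX$ and once with $Y=JX$ produces two equations with the same right-hand side except for the sectional-curvature term, so subtracting gives $(a_{0}+a_{1})\bigl(k(X,JX)-k(X,GX)\bigr)=0$.

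To finish, by the definition of $\mathcal{GH}$ we have $k(X,GX)=\mathcal{GH}(X)$, while (\ref{k(X,JX)=GH(X)+3}) supplies $k(X,JX)=\mathcal{GH}(X)+3$, so $k(X,JX)-k(X,GX)=3\neq 0$. Under the same non-degeneracy $a_{0}+a_{1}\neq 0$ that was already used in the proof of the $\mathcal{B}$-flat theorem (where one divided by $a_{0}+a_{1}$), this is a contradiction, completing the argument. The most delicate point is the reduction in the second paragraph: one must check that the absorption of the vertical projection by $G^{2}$ does not cause loss of information on the horizontal part of $\mathcal{B}(GX,GY)GZ$, which is exactly what the inversion of $G|_{\mathcal{H}}$ delivers; the parallel $H^{2}$ clause of the hypothesis gives the same horizontal identity by the corresponding argument and can be invoked as a consistency check.
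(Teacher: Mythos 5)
Your proof is correct, but it follows a genuinely different route from the paper's. The paper first shows, using (\ref{r(gx,gy,gz,gw)}) and (\ref{Ricciler}), that $B(GX,GY,GZ,GW)=B(X,Y,Z,W)$ for horizontal $X,Y,Z,W$, and then appeals to Theorem 4 ($B\neq 0$) to assert $g(B(GX,GY)GZ,GW)\neq 0$, whence $G^{2}B(GX,GY)GZ\neq 0$; note, however, that Theorem 4 only rules out the identical vanishing of $B$ (its proof evaluates $B$ on vertical arguments such as $U$), so strictly speaking it does not by itself guarantee that $B$ is nonzero on purely horizontal $4$-tuples, which is what contradicting $\mathcal{GH}$-$\mathcal{B}$-flatness requires. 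You instead convert the hypothesis directly into the vanishing of $B$ on horizontal $4$-tuples (using that $G^{2}=-I$ on $\mathcal{H}$, that $G$ maps onto $\mathcal{H}$, and that $G|_{\mathcal{H}}$ is invertible), then evaluate $B(X,Y,Y,X)$ for the two orthonormal horizontal choices $Y=GX$ and $Y=JX$, use $\rho(GX,GX)=\rho(JX,JX)=\rho(X,X)$ from (\ref{Ricciler}) to cancel the Ricci and scalar terms, and obtain $(a_{0}+a_{1})\bigl(k(X,JX)-k(X,GX)\bigr)=0$, which contradicts (\ref{k(X,JX)=GH(X)+3}) since the difference equals $3$. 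Your argument is thus self-contained (it does not invoke Theorem 4 at all), needs only the non-degeneracy $a_{0}+a_{1}\neq 0$ rather than additionally $a_{5}\neq 0$, and in effect repairs the logical gap in the paper's own proof; the paper's approach, in exchange, is shorter and highlights the $G$-invariance of $B$ on horizontal vectors, which is of independent interest. Do state the assumption $a_{0}+a_{1}\neq 0$ explicitly, since it is only implicit in the paper.
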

\begin{proof}
	Let $ N $ be a $ \mathcal{B}-$flat complex Sasakian manifold. For  $ X,Y, Z,W\in \Gamma(\mathcal{H}) $ we have 
	\begin{eqnarray*}
g(B(GX,GY)GZ, GW)&&=B(GX,GY,GZ,GW)\\
 &&= a_{0}R(GX,GY,GZ,GW) + a_{1}R(GX,GY,GZ,GW)\\
&&+ a_{2}\rho(GY,GZ)g(GX,GW)+a_{3}\rho(GX,GZ)g(GY,GW)\notag\\
&&+a_{4}\rho(GX,GY)g(GZ,GW)+a_{5}\rho(GX,GW)g(GY,GZ)\notag \\
&&+a_{6}\rho(GY,GW)g(GX,GZ)+a_{7}\rho(GZ,GW)g(GX,GY) \notag \\
&&+ \tau\{a_{8}g(GX,GW)g(GY,GZ)+a_{9}g(GX,GZ)g(GY,GW) \notag\\
&&+a_{10}g(GX,GY)g(GZ,GW)\}. \notag
\end{eqnarray*}
From (\ref{r(gx,gy,gz,gw)}) and (\ref{Ricciler}) we get 
\begin{eqnarray*}
g(B(GX,GY)GZ, GW)=B(GX,GY,GZ,GW)=B(X,Y,Z,W).
\end{eqnarray*}
Also we know from Theorem 4, $ B\neq 0 $, thus we have 
\begin{equation*}
g(B(GX,GY)GZ, GW)=-g(GB(GX,GY)GZ, W)\neq 0.
\end{equation*}
Because of $ W \in \Gamma(T\mathcal{H}) $ then $GB(GX,GY)GZ\neq0$ and so $G^2B(GX,GY)GZ\neq0$. 
\end{proof}
By this theorems we obtain that on a complex Sasakian manifold Weyl conformal curvature tensor, projective curvature tensor, concircular curvature tensor, conharmonic curvature tensor, quasi conformal curvature
tensor, pseudo projective curvature
tensor, quasi-concircular curvature tensor, pseudo
quasi conformal curvature tensor, M-projective curvature tensor, $ W_{i} $-curvature tensor, $ i = 1, 2, . . . ,9 $, $ W_{i}* $
-curvature, $ T $ -curvature tensor (see \cite{shaikh2018some} for details on tensors) can not vanish. In the other words a complex Sasakian manifold can not transform to flat space under any transformation such as conformal, projective, concircular etc.

\end{document}